\begin{document}

\title{Numerical solution of a class of third-kind Volterra integral equations using Jacobi wavelets}

\titlerunning{Numerical solution of a class of third-kind Volterra integral equations using Jacobi wavelets}

\author{S. Nemati \and Pedro M. Lima \and Delfim F. M. Torres}

\institute{S. Nemati 
\at Department of Mathematics, Faculty of Mathematical Sciences, 
University of Mazandaran, Babolsar, Iran\\
\email{s.nemati@umz.ac.ir}
\and Pedro M. Lima 
\at Centro de Matem\'{a}tica Computacional e Estoc\'astica, 
Instituto Superior T\'{e}cnico, Universidade de Lisboa,\\ 
Av. Rovisco Pais, 1049-001 Lisboa, Portugal\\
\email{pedro.t.lima@ist.utl.pt}
\and Delfim F. M. Torres 
\at Center for Research and Development in Mathematics and Applications (CIDMA),
Department of Mathematics, University of Aveiro, 3810-193 Aveiro, Portugal\\
\email{delfim@ua.pt}}

\date{Submitted: 12-Oct-2019 / Revised: 17-Dec-2019 / Accepted: 11-Feb-2020}


\maketitle

\begin{abstract}
We propose a spectral collocation method, based on the generalized 
Jacobi wavelets along with the Gauss--Jacobi quadrature formula, 
for solving a class of third-kind Volterra integral equations. 
To do this, the interval of integration is first transformed into the interval $[-1,1]$, 
by considering a suitable change of variable. Then, by introducing special 
Jacobi parameters, the integral part is approximated using the Gauss--Jacobi 
quadrature rule. An approximation of the unknown function is considered in terms 
of Jacobi wavelets functions with unknown coefficients, which must be determined. 
By substituting this approximation into the equation, and collocating the 
resulting equation at a set of collocation points, a system of linear algebraic 
equations is obtained. Then, we suggest a method to determine the number of basis 
functions necessary to attain a certain precision. Finally, some examples 
are included to illustrate the applicability, efficiency, 
and accuracy of the new scheme.

\keywords{Third-kind Volterra integral equations 
\and Jacobi wavelets 
\and Gauss--Jacobi quadrature 
\and Collocation points.}

\subclass{34D05 \and 45E10 \and 65T60}
\end{abstract}


\section{Introduction}
\label{sec:1}

In this paper, we consider the following Volterra integral equation (VIE)
\begin{equation}
\label{1.1}
t^\beta u(t)=f(t)+\int_0^t(t-x)^{-\alpha}\kappa(t,x)u(x)dx,
\quad t\in [0,T],
\end{equation}
where $\beta>0$, $\alpha\in[0,1)$, $\alpha+\beta\geq 1$, $f(t)=t^\beta g(t)$ 
with a continuous function $g$, and $\kappa$ is continuous on the domain 
$\Delta:=\{(t,x):0\leq x\leq t\leq T\}$ and is of the form
\begin{equation*}
\kappa(t,x)=x^{\alpha+\beta-1}\kappa_1(t,x),
\end{equation*}
where $\kappa_1$ is continuous on $\Delta$.
The existence of the term $t^{\beta}$ in the left-hand side of equation \eqref{1.1} 
gives it special properties, which are distinct from those of VIEs of the second kind 
(where the left-hand side is always different from zero), and also distinct 
from those of the first kind (where the left-hand side is constant and equal to zero). 
This is why in the literature they are often mentioned as VIEs of the third kind.
This class of equations has attracted the attention of researchers in the last years.
The existence, uniqueness, and regularity of solutions to \eqref{1.1} were discussed 
in \cite{Sonia1}. In that paper, the authors have derived necessary conditions to convert 
the equation into a cordial VIE, a class of VIEs which was studied in detail in
\cite{Vainikko1,Vainikko2}. This made possible to apply to equation \eqref{1.1}
some results known for cordial equations. In particular, the case $\alpha +\beta>1$
is of special interest, because in this case, if $\kappa_1(t,x) >0$, the integral 
operator associated to \eqref{1.1} is not compact and it is not possible to assure 
the solvability of the equation by classical numerical methods. In \cite{Sonia1}, 
the authors have introduced a modified graded mesh and proved that with such mesh 
the collocation method is applicable and has the same convergence order 
as for regular equations. 

In \cite{Nemati}, two of the authors of the present paper have applied 
a different approach, which consisted in expanding the solution as a series of
adjusted hat functions and approximating the integral operator by an operational matrix.
The advantage of that approach is that it reduces the  problem to a system of linear
equations with a simple structure, which splits into subsystems of three equations,
making the method efficient and easy to implement \cite{Nemati}. 
A limitation of this technique is that the optimal convergence 
order ($O(h^4)$) can be attained only under 
the condition that the solution satisfies $u\in C^4([0,T])$,
which is not the case in many applications.  

It is worth remarking here that there is a close connection between equations 
of class \eqref{1.1} and fractional differential equations \cite{MyID:410}. Actually, 
the kernel of \eqref{1.1} has the same form as the one of a fractional differential equation, 
and if we consider the case $\kappa(t,x)\equiv 1$, then the integral operator is 
the Riemann--Liouville operator of order $1-\alpha$. Therefore, it makes sense 
to apply to this class of equations numerical approaches that have recently been 
applied with success to fractional differential equations and related problems \cite{MyID:410}.

One of these techniques were wavelets, a set of functions built by dilation 
and translation of a single function $\phi(t)$, which is called the mother wavelet. 
These functions are known as a very powerful computational tool. 
The term wavelet was introduced by Jean Morlet about 1975 and the theory 
of the wavelet transform was developed by him and Alex Grossmann in the 80s \cite{Grossmann,Grossmann1}.
Some developments exist concerning the multiresolution analysis algorithm based on
wavelets \cite{Daubechies2} and the construction of compactly supported orthonormal 
wavelet basis \cite{Mallat}. Wavelets form an unconditional (Riesz) basis 
for $L^2(R)$, in the sense that any function in $L^2(R)$ can be decomposed 
and reconstructed in terms of wavelets \cite{Daubechies3}. 
Many authors have constructed and used different types of wavelets, 
such as B-spline \cite{XLi}, Haar \cite{Chen}, Chebyshev \cite{YLi}, 
Legendre \cite{Jafari}, and Bernoulli \cite{Rahimkani} wavelets. The advantage
of employing wavelets, in comparison with other basis functions, is that when using
them one can improve the accuracy of the approximation in two different ways:
(i) by increasing the degree of the mother function (assuming that it is a polynomial);  
(ii) by increasing the level of resolution, that is, reducing the support 
of each basis function.

We underline that the application of wavelets has special advantages 
in the case of equations with non-smooth solutions, as it is the case 
of equation \eqref{1.1}. In such cases, increasing the degree of the 
polynomial approximation is not a way to improve the accuracy 
of the approximation; however, such improvement can be obtained 
by increasing the level of resolution. 

In a recent work \cite{Nemati2}, Legendre wavelets were applied 
to the numerical solution of fractional delay-type integro-differential equations. 
In the present paper we will apply a close technique to approximate the solution of \eqref{1.1}.
  
The paper is organized as follows. Section~\ref{sec:2} is devoted to the 
required preliminaries for presenting the numerical technique. 
In Section~\ref{sec:3}, we give some error bounds for the best approximation 
of a given function by a generalized Jacobi wavelet. Section~\ref{sec:4} 
is concerned with the  presentation of a new numerical method for solving 
equations of type \eqref{1.1}. In Section~\ref{sec:5}, we suggest 
a criterion to determine the number of basis functions. Numerical examples 
are considered in Section~\ref{sec:6} to confirm the high accuracy 
and efficiency of this new numerical technique. Finally, 
concluding remarks are given in Section~\ref{sec:7}.  


\section{Preliminaries}
\label{sec:2}

In this section, we present some definitions and basic concepts 
that will be used in the sequel.


\subsection{Jacobi wavelets}

The Jacobi polynomials $\left\{P_i^{(\nu,\gamma)}(t)\right\}_{i=0}^{\infty}$, 
$\nu,\gamma>-1$, $t\in[-1,1]$, are a set of orthogonal 
functions with respect to the weight function 
\begin{equation*}
w^{(\nu,\gamma)}(t)=(1-t)^\nu(1+t)^\gamma,
\end{equation*}
with the following orthogonality property:
\begin{equation*}
\int_{-1}^{1}w^{(\nu,\gamma)}(t)
P_i^{(\nu,\gamma)}(t)P_j^{(\nu,\gamma)}(t)dt=h_i^{(\nu,\gamma)}\delta_{ij},
\end{equation*}
where $\delta_{ij}$ is the Kronecker delta and 
\begin{equation*}
h_i^{(\nu,\gamma)}=\frac{2^{\nu+\gamma+1}\Gamma(\nu+i+1)
\Gamma(\gamma+i+1)}{i!(\nu+\gamma+2i+1)\Gamma(\nu+\gamma+i+1)}.
\end{equation*}
The Jacobi polynomials include a variety of orthogonal polynomials 
by considering different admissible values for the Jacobi parameters 
$\nu$ and $\gamma$. The most popular cases are the Legendre polynomials, 
which correspond to $\nu=\gamma=0$, Chebyshev polynomials of the first-kind,  
which correspond to $\nu=\gamma=-0.5$, and Chebyshev polynomials of the second-kind,  
which correspond to $\nu=\gamma=0.5$. 

We define the generalized Jacobi wavelets functions on the interval $[0,T)$ as follows:
\begin{equation*}
\psi_{n,m}^{(\nu,\gamma)}(t)
=\left\{
\begin{array}{ll}
2^{\frac{k}{2}}\sqrt{\frac{1}{h_m^{(\nu,\gamma)}T}}
P_m^{(\nu,\gamma)}\left(\frac{2^k}{T}t-2n+1\right),
& \frac{n-1}{2^{k-1}}T\leq t<\frac{n}{2^{k-1}}T,\\
0,&\text{otherwise},
\end{array}
\right.
\end{equation*}
where $k = 1, 2, 3, \ldots $ is the level of resolution, 
$n = 1, 2, 3,\ldots, 2^{k-1}$, $m = 0,1, 2, \ldots$, 
is the degree of the Jacobi polynomial, and $t$ is the normalized time. 
The interested reader can refer to \cite{Boggess,Walnut} for more details 
on wavelets. Jacobi wavelet functions are orthonormal 
with respect to the weight function
\begin{equation*}
w_k^{(\nu,\gamma)}(t)
=\left\{
\begin{array}{ll}
w_{1,k}^{(\nu,\gamma)}(t),& 0\leq t < \frac{1}{2^{k-1}}T,\\
w_{2,k}^{(\nu,\gamma)}(t),& \frac{1}{2^{k-1}}T\leq t < \frac{2}{2^{k-1}}T,\\
\quad \vdots &\\
w_{2^{k-1},k}^{(\nu,\gamma)}(t),& \frac{2^{k-1}-1}{2^{k-1}}T\leq t < T,\\
\end{array}
\right.
\end{equation*}
where
\begin{equation*}
w_{n,k}^{(\nu,\gamma)}(t)=w^{(\nu,\gamma)}\left(\frac{2^k}{T}t-2n+1\right),
\quad n=1,2,\ldots, 2^{k-1}.
\end{equation*}

An arbitrary function $u\in L^2[0,T)$ may be approximated 
using Jacobi wavelet functions as
\begin{equation*}
u(t)\simeq \Psi_{k,M}^{(\nu,\gamma)}(t) 
=\sum_{n=1}^{2^{k-1}}\sum_{m=0}^{M}u_{n,m}\psi_{n,m}^{(\nu,\gamma)}(t),
\end{equation*}
where
\begin{equation*}
\begin{split}
u_{n,m}&=\langle u(t),\psi_{n,m}^{(\nu,\gamma)}(t)\rangle_{w_{k}^{(\nu,\gamma)}}\\
&=\int_0^T w_k^{(\nu,\gamma)}(t)u(t)\psi_{n,m}^{(\nu,\gamma)}(t)dt\\
&=\int_{\frac{n-1}{2^{k-1}}T}^{\frac{n}{2^{k-1}}T}
w_{n,k}^{(\nu,\gamma)}(t)u(t)\psi_{n,m}^{(\nu,\gamma)}(t)dt.
\end{split}
\end{equation*}


\subsection{Gauss--Jacobi quadrature rule}

For a given function $u$, the Gauss--Jacobi quadrature formula is given by
\begin{equation*}
\int_{-1}^{1}(1-t)^{\nu}(1+t)^{\gamma}u(t)dt
=\sum_{l=1}^{N}\omega_lu(t_l)+R_N(u),
\end{equation*}
where $t_l$, $l=1,\ldots,N$, are the roots of $P_N^{(\nu,\lambda)}$,  
$\omega_l$, $l=1,\ldots,N$, are the corresponding weights 
given by (see \cite{Shen}):
\begin{equation}
\label{2.4}
\omega_l=\frac{2^{\nu+\gamma+1}\Gamma(\nu+N+1)\Gamma(\gamma+N+1)}{N!
\Gamma(\nu+\gamma+N+1)(\frac{d}{dt}P_N^{(\nu,\gamma)}(t_l))^2(1-t_l^2)},
\end{equation} 
and $R_N(u)$ is the remainder term which is as follows:
\begin{equation}
\label{2.5}
\begin{split}
R_N(u)=& \frac{2^{\nu+\gamma+2N+1}N!\Gamma(\nu+N+1)\Gamma(\gamma+N+1)
\Gamma(\nu+\gamma+N+1)}{(\nu+\gamma+2N+1)(\Gamma(\nu+\gamma+2N+1))^2}\\
&\quad\quad\quad\quad\quad\quad\quad\quad\quad\quad\quad\quad\quad\quad
\times\frac{u^{(2N)}(\eta)}{(2N)!},\quad \eta\in(-1,1).
\end{split}
\end{equation}
According to the remainder term \eqref{2.5}, the Gauss--Jacobi quadrature rule 
is exact for all polynomials of degree less than or equal to $2N-1$. 
This rule is valid if $u$ possesses no singularity in $(-1,1)$.  
It should be noted that the roots and weights of the Gauss--Jacobi 
quadrature rule can be obtained using numerical 
algorithms (see, e.g., \cite{Pang}).


\section{Best approximation errors}
\label{sec:3}

The aim of this section is to give some estimates for the error 
of the Jacobi wavelets approximation of a function $u$ 
in terms of  Sobolev norms and seminorms. With this purpose, 
we extend to the case of Jacobi wavelets some results which were 
obtained in \cite{Canuto} for the best approximation error by Jacobi polynomials
in Sobolev spaces. The main result of this section is Theorem~\ref{thm:3.1}, 
which establishes a relationship between the regularity of a given function 
and the convergence rate of its approximation by Jacobi wavelets.

We first introduce some notations that will be used in this paper.  
Suppose that $L^2_{w^*}(a,b)$ is the space of measurable functions 
whose square is Lebesgue integrable in $(a,b)$ relative 
to the weight function $w^*$. The inner product and
norm of $L^2_{w^*}(a,b)$ are, respectively, defined by
\begin{equation*}
\langle u,v\rangle_{w^*}
=\int_a^b w^*(t)u(t)v(t)dt,
\quad \forall~u,v\in L^2_{w^*}(a,b),
\end{equation*}
and
\begin{equation*}
\left\| u\right\|_{L_{w^*}^2(a,b)}
=\sqrt{\langle u,u\rangle_{w^*}}.
\end{equation*}
The Sobolev norm of integer order $r\geq 0$ in the interval $(a,b)$, is given by
\begin{equation}
\label{4.1}
\parallel u \parallel _{ H_{w^*}^{r}(a,b)}
=\left({\sum^r _{j=0}\parallel u^{(j)}
\parallel^2 _{L_{w^*}^2(a,b)}} \right)^\frac{1}{2},
\end{equation}
where $u^{(j)}$ denotes the $j$th derivative of $u$ and $H_{w^*}^{r}(a,b)$ 
is a weighted Sobolev space relative to the weight function $w^*$. 

For ease of use, for some fixed values of $-1<\nu,\gamma<1$, we set 
\begin{equation*}
\psi_{i,j}(t):=\psi_{i,j}^{(\nu,\gamma)}(t), 
~ w(t):=w^{(\nu,\gamma)}(t), 
~ w_k(t):=w_k^{(\nu,\gamma)}(t),
~ w_{n,k}(t):=w_{n,k}^{(\nu,\gamma)}(t).
\end{equation*}
For starting the error discussion, first, we recall the following 
lemma from \cite{Canuto}.

\begin{lemma}[See \cite{Canuto}] 
\label{lem1}
Assume that $u\in H_w^\mu(-1,1)$ with $\mu\geq 0$ and $L_{M}(u) \in \mathbb{P}_{M}$ 
denotes the truncated Jacobi series of $u$, where $\mathbb{P}_{M}$ is the space 
of all polynomials of degree less than or equal to $M$. Then,
\begin{equation}
\label{4.2}
\parallel{ u-L_{M}(u)} \parallel_{L_w^2(-1,1)}
\leq CM^{-\mu}|u|_{H_w^{\mu;M}(-1,1)},
\end{equation}
where
\begin{equation}
\label{4.3}
\mid u \mid _{ H_w^{\mu;M}(-1,1)}
=\left({\sum^\mu _{j=\min\{\mu,M+1\}}\parallel 
u^{(j)}\parallel^2_{L_w^2(-1,1)}} \right)^\frac{1}{2}
\end{equation}
and $C$ is a positive constant independent of the function 
$u$ and integer $M$. Also, for $1\leq r \leq \mu$, one has
\begin{equation}
\label{4.4}
\parallel{ u-L_{M}(u)} \parallel_{H_w^r(-1,1)}
\leq CM^{2r-\frac{1}{2}-\mu}|u|_{H_w^{\mu;M}(-1,1)}.
\end{equation}
\end{lemma}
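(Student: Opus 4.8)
The plan is to reduce both bounds \eqref{4.2} and \eqref{4.4} to control of the tail of the Jacobi coefficients of $u$, and then to convert the regularity of $u$ into decay of those coefficients through the Sturm--Liouville structure of the Jacobi polynomials. First I would expand $u=\sum_{k=0}^{\infty}\hat u_k P_k^{(\nu,\gamma)}$ with $\hat u_k=(h_k^{(\nu,\gamma)})^{-1}\langle u,P_k^{(\nu,\gamma)}\rangle_w$, so that $L_M(u)=\sum_{k=0}^{M}\hat u_k P_k^{(\nu,\gamma)}$ is precisely the $L_w^2$-orthogonal projection of $u$ onto $\mathbb{P}_M$. Orthogonality then gives the Parseval identity
\[
\|u-L_M(u)\|_{L_w^2(-1,1)}^2=\sum_{k=M+1}^{\infty}|\hat u_k|^2\,h_k^{(\nu,\gamma)},
\]
so that \eqref{4.2} is equivalent to bounding this tail by $C^2M^{-2\mu}|u|_{H_w^{\mu;M}(-1,1)}^2$.

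The heart of the argument, and the step I expect to be the main obstacle, is the passage from smoothness of $u$ to decay of $|\hat u_k|$. For this I would use that the Jacobi polynomials are eigenfunctions of the singular Sturm--Liouville operator
\[
\mathcal{L}u=-\frac{1}{w}\bigl((1-t^2)\,w\,u'\bigr)',
\qquad \mathcal{L}P_k^{(\nu,\gamma)}=\lambda_k P_k^{(\nu,\gamma)},
\quad \lambda_k=k(k+\nu+\gamma+1).
\]
Because $-1<\nu,\gamma$, the factor $(1-t^2)w$ vanishes at the endpoints, so repeated integration by parts produces no boundary terms and yields $\widehat{(\mathcal{L}^{s}u)}_k=\lambda_k^{s}\hat u_k$; consequently $\|\mathcal{L}^{\mu/2}u\|_{L_w^2}^2=\sum_k\lambda_k^{\mu}|\hat u_k|^2 h_k$, with $\lambda_k\sim k^2$. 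Inserting $1=\lambda_k^{-\mu}\lambda_k^{\mu}$ in the tail and using $\lambda_k^{-\mu}\le\lambda_{M+1}^{-\mu}\sim M^{-2\mu}$ for $k>M$ gives
\[
\sum_{k>M}|\hat u_k|^2 h_k\le \lambda_{M+1}^{-\mu}\sum_{k>M}\lambda_k^{\mu}|\hat u_k|^2 h_k\le CM^{-2\mu}\,\|\mathcal{L}^{\mu/2}u\|_{L_w^2}^2,
\]
which is exactly \eqref{4.2} once $\|\mathcal{L}^{\mu/2}u\|_{L_w^2}$ is identified with the seminorm \eqref{4.3}. This identification is the delicate point: the operator $\mathcal{L}$ carries the extra factors $(1-t^2)$, so $\|\mathcal{L}^{\mu/2}u\|_{L_w^2}$ is naturally comparable to a \emph{non-uniformly} weighted seminorm involving $\|(1-t^2)^{j/2}u^{(j)}\|_{L_w^2}$ rather than to \eqref{4.3} directly; the equivalence with the ordinary seminorm, together with the truncation of the summation index at $\min\{\mu,M+1\}$ (which reflects that $L_M$ reproduces $\mathbb{P}_M$, so only derivatives of order beyond $M$ enter the tail), is what requires the careful analysis carried out in \cite{Canuto}. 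For non-integer $\mu$ I would define the fractional power $\mathcal{L}^{\mu/2}$ through the eigenfunction decomposition and recover the statement by space interpolation between consecutive integer orders.

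For the Sobolev estimate \eqref{4.4} I would differentiate the residual series term by term, using the classical relation $\tfrac{d}{dt}P_k^{(\nu,\gamma)}=\tfrac{1}{2}(k+\nu+\gamma+1)\,P_{k-1}^{(\nu+1,\gamma+1)}$ and its iterates, so that $(u-L_M u)^{(j)}$ becomes a Jacobi series in the shifted parameters $(\nu+j,\gamma+j)$. The main technical work is then to bound the $L_w^2$-norms of these derivatives: the polynomials carry the shifted weight $w^{(\nu+j,\gamma+j)}$ while the norm uses $w^{(\nu,\gamma)}$, and tracking this mismatch, together with the $k$-growth of the weighted norms $\|(P_k^{(\nu,\gamma)})^{(j)}\|_{L_w^2}$, produces the exponent $2r-\tfrac12-\mu$ after combining with the coefficient decay established above. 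Summing over $0\le j\le r$, dominating by the top-order term, and using interpolation for non-integer $r$ and $\mu$ then yields \eqref{4.4}. I expect the sharp bookkeeping of the $k$-dependence under the weight shift to be the most error-prone part, and it is precisely where the refinement $-\tfrac12$, as opposed to the cruder exponent $2r-\mu$, originates.
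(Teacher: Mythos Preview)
The paper does not prove this lemma at all: it is quoted verbatim from \cite{Canuto} (note the header ``See \cite{Canuto}'') and used as a black box in the proof of Theorem~\ref{thm:3.1}. So there is no ``paper's own proof'' to compare your proposal to.

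That said, your sketch is essentially the standard argument one finds in the Canuto--Hussaini--Quarteroni--Zang text and related literature: the $L_w^2$ estimate via Parseval and the eigenvalue growth of the Jacobi Sturm--Liouville operator, and the $H_w^r$ estimate via differentiation of the tail series combined with inverse/Bernstein-type inequalities. You correctly flag the two genuinely delicate points---the equivalence between $\|\mathcal{L}^{\mu/2}u\|_{L_w^2}$ and the seminorm \eqref{4.3} with the \emph{unshifted} weight, and the bookkeeping of the weight mismatch after differentiation that yields the sharp exponent $2r-\tfrac12-\mu$---and you are right that these are precisely where the work lies in \cite{Canuto}. For the purposes of the present paper, however, none of this is needed: the lemma is simply imported.
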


Suppose that $\Pi_{M}(I_{k,n})$ denotes the set of all 
functions whose restriction on each subinterval 
$I_{k,n}=\left(\frac{n-1}{2^{k-1}}T, \frac{n}{2^{k-1}}T\right)$, 
$n=1,2,\ldots,{2^{k-1}}$, are polynomials of degree 
at most $M$. Then, the following lemma holds.

\begin{lemma} 
\label{lem2} 
Let $u_{n} : I_{k,n} \rightarrow \mathbb{R}$, 
$n = 1, 2,\ldots ,2^{k-1}$, be functions in 
$H_{w_{n,k}}^\mu(I_{k,n})$ with $\mu\geq 0$. 
Consider the function $\bar{u}_{n}: (-1,1) \rightarrow  \mathbb{R}$ defined by
$(\bar{u}_{n})(t)=u_{n}\left(\frac{T}{2^{k}}(t+2n-1)\right)$ 
for all $t \in (-1,1)$. Then, for $0\leq j\leq \mu$, we have
\begin{equation*}
\parallel (\bar{u}_{n})^{(j)} \parallel _{ L_w^2(-1,1)}
=\left({\frac{2^{k}}{T}}\right)^{\frac{1}{2}-j}\parallel 
u^{(j)}_{n}\parallel _{L_{w_{n,k}}^2(I_{k,n})}.
\end{equation*}
\end{lemma}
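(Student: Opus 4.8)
The plan is to prove the identity by a direct affine change of variable, since $\bar{u}_{n}$ is obtained from $u_{n}$ precisely by rescaling the subinterval $I_{k,n}$ onto the reference interval $(-1,1)$. First I would set $\phi(t)=\frac{T}{2^{k}}(t+2n-1)$, so that $\bar{u}_{n}=u_{n}\circ\phi$, and observe that $\phi$ maps $(-1,1)$ bijectively onto $I_{k,n}$: indeed $\phi(-1)=\frac{n-1}{2^{k-1}}T$ and $\phi(1)=\frac{n}{2^{k-1}}T$. Because $\phi$ is affine with constant derivative $\phi'(t)=\frac{T}{2^{k}}=\left(\frac{2^{k}}{T}\right)^{-1}$ (all its higher derivatives vanishing), the chain rule collapses to a single term and yields $(\bar{u}_{n})^{(j)}(t)=\left(\frac{T}{2^{k}}\right)^{j}u_{n}^{(j)}(\phi(t))$ for every $0\leq j\leq\mu$.

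Next I would relate the two weight functions. By the defining formula $w_{n,k}(x)=w\left(\frac{2^{k}}{T}x-2n+1\right)$, and since $\phi^{-1}(x)=\frac{2^{k}}{T}x-2n+1$, we get $w(\phi^{-1}(x))=w_{n,k}(x)$, equivalently $w(t)=w_{n,k}(\phi(t))$. This compatibility between the reference weight $w$ and the localized weight $w_{n,k}$ is what makes the substitution clean, and it is really the only structural fact one needs beyond the chain rule.

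Finally I would substitute into the squared norm. Writing $x=\phi(t)$ with $dx=\frac{T}{2^{k}}\,dt$, I obtain
\[
\|(\bar{u}_{n})^{(j)}\|_{L_{w}^{2}(-1,1)}^{2}
=\int_{-1}^{1}w(t)\left(\frac{T}{2^{k}}\right)^{2j}\bigl|u_{n}^{(j)}(\phi(t))\bigr|^{2}\,dt
=\left(\frac{T}{2^{k}}\right)^{2j}\frac{2^{k}}{T}\int_{I_{k,n}}w_{n,k}(x)\bigl|u_{n}^{(j)}(x)\bigr|^{2}\,dx.
\]
The only point requiring care is the bookkeeping of the powers of $2^{k}/T$: the $j$-fold differentiation contributes $(T/2^{k})^{2j}$ inside the integral, while the Jacobian of the inverse substitution contributes one factor $2^{k}/T$, so the combined constant equals $(2^{k}/T)^{1-2j}$. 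Taking square roots turns this into the exponent $\tfrac{1}{2}-j$, giving exactly the claimed identity. I do not expect any genuine analytic obstacle here: this is a scaling lemma, and its entire content lies in verifying that the weight transforms correctly under $\phi$ and that the exponent arithmetic is carried out accurately.
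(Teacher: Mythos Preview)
Your proof is correct and follows essentially the same route as the paper: both arguments perform the affine change of variable $t'=\frac{T}{2^{k}}(t+2n-1)$, use the chain rule for the $j$th derivative, and exploit the identity $w(t)=w_{n,k}(\phi(t))$ to transfer the weighted norm from $(-1,1)$ to $I_{k,n}$. If anything, your write-up is slightly more explicit in separating out the chain-rule factor $(T/2^{k})^{2j}$ and the Jacobian factor $2^{k}/T$, whereas the paper merges these into a single step.
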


\begin{proof}
Using the definition of the $L^2$-norm and the change 
of variable $t'=\frac{T}{2^{k}}(t+2n-1)$, we have
\begin{equation*}
\begin{split}
\parallel{{{{\bar {u}_{n}}}^{(j)}}} \parallel_{{L_w^2}( - 1,1)}^2 
&= \int_{ - 1}^1 w(t) |{\bar{u}_{n}}^{(j)} (t)|^2dt \\
&=\int_{ - 1}^1 w(t)\left|{u_{n}}^{(j)} \left(\frac{T}{2^{k}}(t+2n-1)\right)\right|^2dt \\
&= \int_{\frac{{n - 1}}{2^{k-1}}T}^{\frac{n}{2^{k-1}}T} 
w_{n,k}(t^{'})\left(\frac{2^{k}}{T}\right)^{-2j}\left|u_{n}^{(j)}(t')\right|^2\left(
\frac{2^{k}}{T}\right)dt'\\
&= \left(\frac{2^{k}}{T}\right)^{1 - 2j}\parallel 
{u_{n}^{(j)}}\parallel_{{L_{w_{n,k}}^2}(I_{k,n})}^2,
\end{split}
\end{equation*}
which proves the lemma.
\end{proof}

In order to continue the discussion, for convenience, we introduce 
the following seminorm for $u \in H_{w_k}^\mu(0,T)$, 
$0\leq r\leq \mu$, $M\geq 0$ and $k\geq 1$,
which replaces the seminorm \eqref{4.3} in the case 
of a wavelet approximation:
\begin{equation}
\label{4.6}
\mid u \mid _{ H_{w_k}^{r;\mu;M;k}(0,T)}
=\left({\sum^\mu _{j=\min\{\mu,M+1\}}
\left(2^{k}\right)^{2r-2j} \parallel u^{(j)}
\parallel ^2_{L_{w_k}^2(0,T)}} \right)^\frac{1}{2}.
\end{equation}
If we choose $M$ such that $M\geq \mu-1$, it can be easily seen that
\begin{equation}
\label{4.7}
\mid u \mid _{ H_{w_k}^{r;\mu;M;k}(0,T)}
=\left(2^{k}\right)^{r-\mu} \parallel u^{(\mu)}
\parallel _{L_{w_k}^2(0,T)}.
\end{equation}
The next theorem provides an estimate of the best approximation error, 
when Jacobi wavelets are used, in terms of the seminorm defined by \eqref{4.6}. 
\begin{theorem} 
\label{thm:3.1}
Suppose that $u \in H_{w_k}^\mu(0,T)$ with $\mu \geq 0$ and 
$$\Psi_{k,M}(u)=\sum\limits_{n = 1}^{{2^{k - 1}}}{\sum\limits_{m = 0}^{M} 
{{u_{n,m}}{\psi _{n,m}}(t)} },$$
is the best approximation 
of $u$ based on the Jacobi wavelets. Then,
\begin{equation}
\label{4.8}
\parallel{u - \Psi_{k,M}(u)} \parallel_{{L_{w_k}^2}(0,T)}
\leq CM^{-\mu}|u|_{H_{w_k}^{0;\mu;M;k}(0,T)}
\end{equation}
and, for $1\leq r \leq \mu$,
\begin{equation}
\label{4.9}
\parallel{u - \Psi_{k,M}(u)} \parallel_{{H_{w_k}^r}(0,T)}
\leq CM^{2r-\frac{1}{2}-\mu}|u|_{H_{w_k}^{r;\mu;M;k}(0,T)},
\end{equation}
where in \eqref{4.8} and \eqref{4.9} the constant $C$ denotes 
a positive constant that is independent of $M$ and $k$ 
but depends on the length $T$.
\end{theorem}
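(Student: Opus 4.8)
The plan is to reduce the global Jacobi-wavelet estimate on $(0,T)$ to the single-interval Jacobi-polynomial estimate of Lemma~\ref{lem1}, transported through the affine change of variable analysed in Lemma~\ref{lem2}. The key structural observation is that the wavelets $\psi_{n,m}$ are supported on the disjoint subintervals $I_{k,n}$ and are $L^2_{w_{n,k}}(I_{k,n})$-orthonormal there, so the restriction of $\Psi_{k,M}(u)$ to $I_{k,n}$ coincides with the best $L^2_{w_{n,k}}(I_{k,n})$-approximation $P_M(u_n)$ of $u_n:=u|_{I_{k,n}}$ by polynomials of degree at most $M$. Since both the weight $w_k$ and the approximant $\Psi_{k,M}(u)$ are defined piecewise, the $L^2_{w_k}$-norm and every term of the Sobolev norm \eqref{4.1} split as sums over $n$; for instance
$$\parallel u-\Psi_{k,M}(u)\parallel^2_{L^2_{w_k}(0,T)}=\sum_{n=1}^{2^{k-1}}\parallel u_n-P_M(u_n)\parallel^2_{L^2_{w_{n,k}}(I_{k,n})}.$$
This isolates the estimate on a single subinterval.

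Next I would pass to the reference interval via $\bar u_n(t)=u_n\!\left(\tfrac{T}{2^{k}}(t+2n-1)\right)$ as in Lemma~\ref{lem2}. Because the change of variable is affine it maps $\mathbb{P}_M$ onto $\mathbb{P}_M$ and the weighted orthogonal projection onto the truncated Jacobi series, so that $\overline{u_n-P_M(u_n)}=\bar u_n-L_M(\bar u_n)$. Applying Lemma~\ref{lem1} to $\bar u_n$ then controls $\parallel \bar u_n-L_M(\bar u_n)\parallel_{L^2_w(-1,1)}$ through \eqref{4.2} for the $L^2$ estimate, and $\parallel \bar u_n-L_M(\bar u_n)\parallel_{H^r_w(-1,1)}$ through \eqref{4.4} for the Sobolev estimate, each in terms of the reference seminorm $|\bar u_n|_{H_w^{\mu;M}(-1,1)}$.

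I would then translate these reference-interval inequalities back to $I_{k,n}$ using the scaling identity of Lemma~\ref{lem2}, namely $\parallel(\bar u_n)^{(j)}\parallel_{L^2_w(-1,1)}=(2^{k}/T)^{1/2-j}\parallel u_n^{(j)}\parallel_{L^2_{w_{n,k}}(I_{k,n})}$, applied both to the left-hand side and to every term of the seminorm $|\bar u_n|^2_{H_w^{\mu;M}(-1,1)}=\sum_{j}\parallel(\bar u_n)^{(j)}\parallel^2_{L^2_w(-1,1)}$. The factor $(2^{k}/T)^{1/2-j}$ is exactly what generates the powers of $2^{k}$ occurring in the wavelet seminorm \eqref{4.6}. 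Summing the per-interval estimates over $n$ and using $\sum_{n}\parallel u_n^{(j)}\parallel^2_{L^2_{w_{n,k}}(I_{k,n})}=\parallel u^{(j)}\parallel^2_{L^2_{w_k}(0,T)}$ reassembles the global seminorm $|u|_{H_{w_k}^{r;\mu;M;k}(0,T)}$, with the purely $T$-dependent powers absorbed into the constant $C$; for the $L^2$ case this already yields \eqref{4.8}.

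The main obstacle I anticipate lies in the Sobolev case \eqref{4.9}: the scaling of Lemma~\ref{lem2} is not homogeneous in the derivative order, so the factor $(2^{k}/T)^{2i-1}$ attached to the $i$-th derivative term of the norm varies with $i$, and likewise \eqref{4.4} produces an $M$-power $M^{2i-1/2-\mu}$ that depends on $i$. I would resolve this by invoking $M\geq 1$ and $k\geq 1$ to dominate every term $0\leq i\leq r$ by the top-order term $i=r$, which gives the uniform rate $M^{2r-1/2-\mu}$ and matches the weights $(2^{k})^{2r-2j}$ in \eqref{4.6}. The finitely many $T$- and $\mu$-dependent multiplicative constants created in this reduction are collected into $C$, which thereby remains independent of $M$ and $k$, as claimed.
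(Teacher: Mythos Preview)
Your proposal is correct and follows essentially the same route as the paper: split the global norm over the subintervals $I_{k,n}$, push each piece to $(-1,1)$ via the affine map of Lemma~\ref{lem2}, apply Lemma~\ref{lem1}, pull the resulting seminorm back with the same scaling identity, and sum. The only cosmetic difference in the Sobolev case is that the paper first bounds every factor $(2^k)^{2j-1}$ by $(2^k)^{2r-1}$ and then applies \eqref{4.4} once to the reassembled $H^r_w(-1,1)$ norm, whereas you propose applying \eqref{4.2}/\eqref{4.4} derivative-by-derivative and then dominating the resulting $M$- and $2^k$-powers by the top order $i=r$; both executions are valid and yield the same bound.
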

\begin{proof}
Consider the function $u_{n}:I_{k,n}\rightarrow \mathbb{R}$
such that $u_{n}(t)=u(t)$ for all $t\in I_{k,n}$. Then, 
from \eqref{4.1} and Lemma~\ref{lem2} for $0\leq r\leq \mu$, 
we have 
\begin{equation}
\label{4.10}
\begin{split}
\left\| {{u} - {\Psi _{k,M}}({u})} \right\|_{H_{w_k}^r(0,T )}^2 
&= \sum\limits_{n = 1}^{{2^{k - 1}}} {\left\| {{u_{n}} 
- \sum\limits_{m=0}^{M } {{u_{n,m}}{\psi _{n,m}}(t)} } 
\right\|} _{H_{w_{n,k}}^r({I_{k,n}})}^2\\ 
&= \sum\limits_{n = 1}^{2^{k - 1}}\sum_{j=0}^{r}\left\|{u^{(j)}_{n}} 
- \left(\sum\limits_{m=0}^{M } {{u_{n,m}}{\psi_{n,m}}(t)}\right)^{(j)}  
\right\|_{L_{w_{n,k}}^2(I_{k,n} )}\\
&=\sum\limits_{n = 1}^{2^{k - 1}} \sum\limits_{j = 0}^r 
{\left(\frac{2^{k}}{T}\right)}^{2j - 1}{\left\| {\bar u}_{n}^{(j)} 
-{\left(L_M({\bar u}_{n})\right)}^{(j)} \right\|}_{L_w^2( - 1,1)}^2\\
&\leq C_1\sum\limits_{n = 1}^{2^{k - 1}} \sum\limits_{j = 0}^r 
{\left(2^{k}\right)}^{2j - 1}{\left\| {\bar u}_{n}^{(j)} 
-{\left(L_M({\bar u}_{n})\right)}^{(j)} \right\|}_{L_w^2(- 1,1)}^2.
\end{split}
\end{equation}
By setting $r=0$ in \eqref{4.10}, we obtain
\begin{equation*}
\begin{split}
\left\|u - \Psi _{k,M }(u) \right\|_{L_{w_k}^2(0,T )}^2 
&\leq C_1\sum\limits_{n = 1}^{2^{k - 1}} 
{\left(2^{k}\right)}^{-1}{\left\| {\bar u}_{n} 
-{\left(L_M({\bar u}_{n})\right)}\right\|}_{L_w^2(-1,1)}^2\\
&\leq C_2M^{-2\mu}{\left(2^{k}\right)}^{ - 1}\sum\limits_{n = 1}^{2^{k - 1}}
\sum^\mu _{j=\min\{\mu,M+1\}}{\left\| {\bar u}_{n}^{(j)}\right\|}_{L_w^2( - 1,1)}^2\\
&\leq CM^{-2\mu}\sum^\mu _{j=\min\{\mu,M+1\}}{\left(2^{k}\right)}^{- 2j}
\sum\limits_{n = 1}^{2^{k - 1}}{\left\| { u}_{n}^{(j)}\right\|}_{L_{w_{n,k}}^2( I_{n,k})}^2\\
&= CM^{-2\mu}\sum^\mu _{j=\min\{\mu,M+1\}}{\left(2^{k}\right)}^{- 2j}{\left\| 
{u}^{(j)}\right\|}_{L_{w_{k}}^2(0,T)}^2,
\end{split}
\end{equation*}
where we have used \eqref{4.2} and Lemma~\ref{lem2}. This completes 
the proof of \eqref{4.8}. Furthermore, using \eqref{4.10} 
for $1\leq r \leq \mu$ and $k\geq 1$, we get:
\begin{align*}
\left\| {{u} - {\Psi _{k,M}}({u})} \right\|_{H_{w_k}^r(0,T )}^2 
&\leq C_1{\left(2^{k}\right)}^{2r - 1} \sum\limits_{n = 1}^{2^{k - 1}} 
\sum\limits_{j = 0}^r {\left\| {\bar u}_{n}^{(j)} 
-{\left(L_M({\bar u}_{n})\right)}^{(j)} \right\|}_{L_w^2( - 1,1)}^2\\
&=C_1{\left(2^{k}\right)}^{2r - 1} \sum\limits_{n = 1}^{2^{k - 1}}{
\left\| {\bar u}_{n} -{L_M({\bar u}_{n})} \right\|}_{H^r_w( - 1,1)}^2\\
&\leq C_2M^{4r-1-2\mu}{\left(2^{k}\right)}^{2r - 1}\sum\limits_{n = 1}^{2^{k - 1}}
\sum^\mu_{j=\min\{\mu,M+1\}}{\left\| {\bar u}_{n}^{(j)}\right\|}_{L_w^2( - 1,1)}^2\\
&=C_2M^{4r-1-2\mu}{\left(2^{k}\right)}^{2r - 1}\sum^\mu _{j=\min\{\mu,M+1\}}
\sum\limits_{n = 1}^{2^{k - 1}}{\left\| {\bar u}_{n}^{(j)}\right\|}_{L_w^2( - 1,1)}^2\\
&\leq CM^{4r-1-2\mu}\sum^\mu _{j=\min\{\mu,M+1\}}\left(2^k\right)^{2r-2j}
\sum\limits_{n = 1}^{2^{k - 1}}\left\| u_{n}^{(j)}\right\|^2_{L^2_{w_{n,k}(I_{k,n})}}\\
&= CM^{4r-1-2\mu}\sum^\mu _{j=\min\{\mu,M+1\}}\left(2^k\right)^{2r-2j}\left\| 
u^{(j)}\right\|^2_{L^2_{w_{k}(0,T)}},
\end{align*}
where we have used \eqref{4.1}, \eqref{4.4}, and Lemma~\ref{lem2}. 
Therefore, we have proved \eqref{4.9}.
\end{proof}

\begin{remark}
We can also obtain estimates for the Jacobi wavelets approximation in terms
of the $L^2$-norm.  With $M\geq \mu-1$,
if we combine \eqref{4.7} with \eqref{4.8}, we get
\begin{equation*}
\parallel{u - \Psi_{k,M}(u)} \parallel_{{L_{w_k}^2}(0,T)}
\leq CM^{-\mu}2^{-\mu k} \parallel u^{(\mu)}
\parallel _{L_{w_k}^2(0,T)};
\end{equation*}
from \eqref{4.7} and\eqref{4.8}, we obtain
\begin{equation*}
\parallel{u - \Psi_{k,M}(u)} \parallel_{{H_{w_k}^r}(0,T)}
\leq CM^{2r-\frac{1}{2}-\mu}\left(2^{k}\right)^{r-\mu} \parallel u^{(\mu)}
\parallel _{L_{w_k}^2(0,T)},\quad r\geq 1.
\end{equation*}
\end{remark} 


\section{Method of solution}
\label{sec:4}

In this section, we propose a method for solving the VIE \eqref{1.1}. 
To this end, by using a suitable change of variable, we transform the 
interval of the integral to $[-1,1]$. Suppose that 
\begin{equation*}
s=2\left(\frac{x}{t}\right)-1,\quad ds=\frac{2}{t}dx.
\end{equation*}
Therefore, the equation \eqref{1.1} is transformed 
into the following integral equation:
\begin{equation}
\label{3.1}
t^\beta u(t)=f(t)+{\left(\frac{t}{2}\right)}^{1-\alpha}
\int_{-1}^{1}(1-s)^{-\alpha}\kappa(t,\frac{t}{2}(s+1))
u(\frac{t}{2}(s+1))ds.
\end{equation}
In order to compute the integral part of \eqref{3.1}, 
we set $\nu=-\alpha$ and $\gamma=0$ as the Jacobi parameters 
and use the Gauss--Jacobi quadrature rule. Then, we have
\begin{equation}
\label{3.2}
t^\beta u(t)=f(t)+{\left(\frac{t}{2}\right)}^{1-\alpha}\sum_{l=1}^N
\omega_l\kappa(t,\frac{t}{2}(s_l+1))u(\frac{t}{2}(s_l+1)),
\end{equation}
where $s_l$ are the zeros of $P_N^{(-\alpha,0)}$ 
and $w_l$ are given using \eqref{2.4} as
\begin{equation*}
\omega_l=\frac{2^{1-\alpha}}{\left(\frac{d}{dx}
P_N^{(-\alpha,0)}(s_l)\right)^2\left(1-s_l^2\right)},
\quad l=1,2,\ldots,N.
\end{equation*}

We consider an approximation of the solution of \eqref{1.1} 
in terms of the Jacobi wavelets functions as follows:
\begin{equation}
\label{3.3}
u(t)\simeq\sum_{i=1}^{2^{k-1}}\sum_{j=0}^{M}
u_{i,j}\psi_{i,j}^{(\nu,\gamma)}(t),
\end{equation}
where the Jacobi wavelets coefficients $u_{i,j}$ are unknown. 
In order to determine these unknown coefficients, 
we substitute \eqref{3.3} into \eqref{3.2} and get
\begin{equation}
\label{3.4}
\begin{split}
\sum_{i=1}^{2^{k-1}}\sum_{j=0}^{M}&\left[t^\beta \psi_{i,j}^{(\nu,\gamma)}(t)
-{\left(\frac{t}{2}\right)}^{1-\alpha}\sum_{l=1}^N\omega_l
\kappa\left(t,\frac{t}{2}(s_l+1)\right)\right.\\
&\quad\quad\quad\quad\quad\quad\quad\quad\quad
\quad\quad\quad\quad\quad\left.\times\psi_{i,j}^{(\nu,\gamma)}\left(
\frac{t}{2}(s_l+1)\right)\right]u_{i,j}=f(t).
\end{split}
\end{equation}
In this step, we define the following collocation points
\begin{equation*}
t_{n,m}=\frac{T}{2^k}\left( \tau_m+2n-1\right),
\quad n=1,2,\ldots,2^{k-1},~m=0,1,\ldots,M,
\end{equation*}
where $\tau_m$, $m=0,1,\ldots,M$, are the zeros of $P_{M+1}^{(\nu,\gamma)}$. 
Therefore, $t_{n,m}$ are the shifted Gauss--Jacobi points in the interval
$\left(\frac{n-1}{2^{k-1}}T,\frac{n}{2^{k-1}}T\right)$, 
corresponding to the Jacobi parameters $\nu$ and $\gamma$. By collocating 
\eqref{3.4} at the points $t_{n,m}$, we obtain
\begin{equation}
\label{3.5}
\begin{split}
\sum_{i=1}^{2^{k-1}}\sum_{j=0}^{M}\left[t_{n,m}^\beta
\psi_{i,j}^{(\nu,\gamma)}(t_{n,m})-\right.&{\left(\frac{t_{n,m}}{2}\right)}^{1-\alpha}
\sum_{l=1}^N\omega_l\kappa\left(t,\frac{t_{n,m}}{2}(s_l+1)\right)\\
&\quad\quad\quad\left.\times \psi_{i,j}^{(\nu,\gamma)}\left(
\frac{t_{n,m}}{2}(s_l+1)\right)\right]u_{i,j}=f(t_{n,m}).
\end{split}
\end{equation}
By considering $n=1,2,\ldots,2^{k-1}$ and $m=0,1,\ldots,M$, in the above equation, 
we have a system of linear algebraic equations that can be rewritten 
as the following matrix form:
\begin{equation}
\label{3.6}
AU=F,
\end{equation}
where
\begin{equation*}
U=\left[\begin{array}{c}
u_{1,0}\\
u_{1,1}\\
\vdots\\
u_{1,M}\\
\vdots\\
u_{2^{k-1},0}\\
u_{2^{k-1},1}\\
\vdots\\
u_{2^{k-1},M}\\
\end{array}
\right],\quad
\quad 
F=\left[\begin{array}{c}
f(t_{1,0})\\
f(t_{1,1})\\
\vdots\\
f(t_{1,M})\\
\vdots\\
f(t_{2^{k-1},0})\\
f(t_{2^{k-1},1})\\
\vdots\\
f(t_{2^{k-1},M})\\
\end{array}
\right], 
\end{equation*}
and the entries of the rows of the matrix $A$ are the expressions in the bracket 
in \eqref{3.5}, which vary corresponding to the values of $i$ and $j$, i.e., 
the coefficients of $u_{i,j}$, $i=1,2,\ldots,2^{k-1}$, $j=0,1,\ldots,M$, 
for $t_{n,m}$, that are all nonzero and positive. Since the functions 
$\psi_{i,j}^{(\nu,\gamma)}$ are orthonormal and the nodes $t_{n,m}$ 
are pairwise distinct, the matrix $A$ is nonsingular. Therefore, \eqref{3.6} 
is unique solvable. By solving this system using a direct method, 
the unknown coefficients $u_{i,j}$ are obtained. Finally, an approximation 
of the solution of \eqref{1.1} is given by \eqref{3.3}. 


\section{A criterion for choosing the number of wavelets}
\label{sec:5}

Now we discuss the choice of adequate values of $k$ and $M$ (number of basis functions). 
To do this, we suppose that $u(\cdot)\in C^{2N}([0,T])$ 
and $\kappa(\cdot,\cdot)\in C^{2N}([0,T]\times [0,T])$. Using the error of the 
Gauss--Jacobi quadrature rule given by \eqref{2.5}, and substituting 
$\nu=-\alpha$ and $\gamma=0$ in it, the exact solution 
of \eqref{1.1} satisfies the equation 
\begin{equation*}
t^\beta u(t)=f(t)+{\left(\frac{t}{2}\right)}^{1-\alpha}\left[\sum_{l=1}^N
\omega_l\kappa\left(t,\frac{t}{2}(s_l+1)\right)u\left(\frac{t}{2}(s_l+1)\right)
+R_N(\kappa u)\right],
\end{equation*}
where 
\begin{equation*}
\begin{split}
R_N(\kappa u)=&\frac{2^{-\alpha+2N+1}(N!)^2\left(
\Gamma(-\alpha+N+1)\right)^2}{(2N)!(-\alpha+2N+1)\left(
\Gamma(-\alpha+2N+1)\right)^2}\\
&\quad\quad\quad\quad\quad\quad\quad\quad\quad\quad\quad\quad
\times {\left(\frac{t}{2}\right)}^{2N}\frac{\partial^{2N}
\left(\kappa(t,x)u(x)\right)}{\partial x^{2N}}|_{x=\eta},
\end{split}
\end{equation*}
for $\eta\in(0,T)$. Therefore, we have
\begin{equation*}
\begin{split}
t^\beta u(t)=f(t)+{\left(\frac{t}{2}\right)}^{1-\alpha}
&\left(\sum_{l=1}^N\omega_l\kappa\left(t,\frac{t}{2}(s_l+1)\right)
u\left(\frac{t}{2}(s_l+1)\right)\right)\\
&\quad\quad\quad+t^{1-\alpha+2N}\xi_{\alpha,N}\frac{\partial^{2N}\left(
\kappa(t,x)u(x)\right)}{\partial x^{2N}}|_{x=\eta},
\end{split}
\end{equation*}
where
\begin{equation*}
\xi_{\alpha,N}=\frac{(N!)^2\left(\Gamma(-\alpha+N+1)\right)^2}{(2N)!
(-\alpha+2N+1)\left(\Gamma(-\alpha+2N+1)\right)^2}.
\end{equation*}
Suppose that $t\neq 0$. Then we obtain that
\begin{equation}
\label{7.1}
\begin{split}
u(t)=g(t)+2^{\alpha-1}{t}^{1-\alpha-\beta}
&\left(\sum_{l=1}^N\omega_l\kappa\left(t,\frac{t}{2}(s_l+1)\right)
u\left(\frac{t}{2}(s_l+1)\right)\right)\\
&\quad\quad\quad+t^{1-\alpha-\beta+2N}\xi_{\alpha,N}
\frac{\partial^{2N}\left(\kappa(t,x)u(x)\right)}{\partial x^{2N}}|_{x=\eta}.
\end{split}
\end{equation}
Let $U_{k,M}(t)=\sum\limits_{i=1}^{2^{k-1}}\sum\limits_{j=0}^{M}u_{i,j}
\psi_{i,j}^{(\nu,\gamma)}(t)$ be the numerical solution of \eqref{1.1} 
obtained by the proposed method in Section~\ref{sec:4}. From the definition 
of the Jacobi wavelets, for the collocation points $t_{n,m}$, 
$n=1,\ldots,2^{k-1}$, $m=0,1,\ldots,M$, we have 
\begin{equation*}
U_{k,M}(t_{n,m})=\sum_{j=0}^{M}u_{n,j}\psi_{n,j}^{(\nu,\gamma)}(t_{n,m}),
\quad n=1,\ldots,2^{k-1}.
\end{equation*}
By definition, the restriction of the functions $\psi_{n,j}^{(\nu,\gamma)}(t)$, 
$n=1,\ldots,2^{k-1}$, on the subinterval $I_{k,n}$, which we denote here 
by $\rho_{n,j}^{(\nu,\gamma)}(t)$, is smooth. Therefore, we can define
\begin{equation*}
\zeta_{n,j}=\max_{x,t\in I_{k,n}}\left|\frac{\partial^{2N}\left(
\kappa(t,x)\rho_{n,j}^{(\nu,\gamma)}(x)\right)}{\partial x^{2N}}\right|.
\end{equation*}
For a given $\varepsilon>0$, since all the collocation points, 
$t_{n,m}$, $n=1,\ldots,2^{k-1}$, $m=0,1,\ldots,M$, are positive, 
using \eqref{7.1}, we can choose $k$ and $M$ such that for all $t_{n,m}$, 
the following criterion holds:
\begin{multline*}
\left|U_{k,M}(t_{n,m})-g(t_{n,m})-2^{\alpha-1}t_{n,m}^{1-\alpha-\beta}\left(
\sum_{l=1}^N\omega_l\kappa\left(t_{n,m},\frac{t_{n,m}}{2}(s_l+1)\right)\right.\right.\\
\left.\left.\times U_{k,M}\left(\frac{t_{n,m}}{2}(s_l+1)\right)\right)\right|
+t_{n,m}^{1-\alpha-\beta+2N}\xi_{\alpha,N}\left|
\sum_{j=0}^Mu_{n,j}\zeta_{n,j}\right|<\varepsilon.
\end{multline*}


\section{Numerical examples}
\label{sec:6}

In this section, we consider three examples of VIEs of the third-kind 
and apply the proposed method to them. The weighted $L^2$-norm is used 
to show the accuracy of the method. In all the examples, we have used 
$N=10$ in the Gauss--Jacobi quadrature formula and the following 
notation is used to show the convergence of the method:
\begin{equation*}
\text{Ratio}=\frac{e(k-1)}{e(k)},
\end{equation*}
where $e(k$) is the $L^2$-error obtained with resolution $k$.


\begin{example}
\label{ex1} 
As the first example, we consider the following third-kind VIE, 
which is an equation of Abel type \cite{Sonia1,Nemati}:
\begin{equation*}
t^{2/3}u(t)=f(t)+\int_0^t \frac{\sqrt{3}}{3\pi}
x^{1/3}(t-x)^{-2/3}u(x)dx, \quad t\in[0,1],
\end{equation*}
where
\begin{equation*}
f(t)=t^{\frac{47}{12}}\left(1-\frac{\Gamma(\frac{1}{3})
\Gamma(\frac{55}{12})}{\pi\sqrt{3}\Gamma(\frac{59}{12})}\right).
\end{equation*}
The exact solution of this equation is $u(t)=t^{\frac{13}{4}}$, 
which belongs to the space $H^3([0,1])$. We have employed the method 
for this example with different values of $M$, $k$, $\nu$ and 
$\gamma$, and reported the results in 
Tables~\ref{tab:1}, \ref{tab:4} and Figure~\ref{fig:1}. 
Table~\ref{tab:1} displays the weighted $L^2$-norm of the error 
for three different classes of the Jacobi parameters, which include 
$\nu=\gamma=0.5$ (second-kind Chebyshev wavelets), 
$\nu=\gamma=0$ (Legendre wavelets), 
and $\nu=\gamma=-0.5$ (first-kind Chebyshev wavelets) 
with different values of $M$ and $k$. Moreover, the ratio 
of the error versus $k$ is given in this table. 
It can be seen from Table~\ref{tab:1} that the method converges 
faster in the case of the second-kind Chebyshev wavelets. 
In Table~\ref{tab:4}, we compare the maximum absolute error at the 
collocation points obtained by our method with the error of the 
collocation method introduced in \cite{Sonia1}, and the operational 
matrix method based on the adjusted hat functions \cite{Nemati}. 
From this table, it can be seen that our method gives more accurate 
results with less collocation points (we have used $192$) than the method of 
\cite{Sonia1} ($N=256$) and also, has higher accuracy with 
a smaller number of basis functions (we have used $192$) than the method 
of \cite{Nemati} ($193$). In Figure~\ref{fig:1}, we show the error 
function obtained by the method based on the second-kind Chebyshev 
wavelets with $M=6$, $k=4$ (left) and $M=6$, $k=6$ (right).   
\begin{table}[!ht]
\centering
\caption{(Example \ref{ex1}.) Numerical results 
with different values of $M$ and $k$.}\label{tab:1}
\begin{tabular}{lllllllll}
\hline
&\multicolumn{8}{c}{$\nu=\gamma=0.5$}\\
\cline{2-9}
&\multicolumn{2}{c}{$M=3$} && \multicolumn{2}{c}{$M=4$}&&\multicolumn{2}{c}{$M=5$}\\
\cline{2-3}\cline{5-6}\cline{8-9}
$k$ & $L^2$-Error & Ratio && $L^2$-Error & Ratio &&$L^2$-Error & Ratio\\
\hline
$1$ &$6.61e-4$ & --- &  & $6.16e-5$ & ---   &&$1.19e-5$ & --- \\
$2$ &$5.68e-5$ & $11.64$&& $4.47e-6$ &$13.78$ &&$8.20e-7$ & $14.51$\\
$3$ &$4.26e-6$ & $13.33$&& $2.92e-7$ &$15.31$ &&$5.28e-8$ & $15.53$\\
$4$ &$3.06e-7$ & $13.92$&& $1.86e-8$ &$15.70$ &&$3.34e-9$ & $15.81$\\
$5$ &$2.15e-8$ &$14.23$ && $1.18e-9$&$15.76$ &&$2.12e-10$&$15.75$\\
\hline
&\multicolumn{8}{c}{$\nu=\gamma=0$}\\
\cline{2-9}
&\multicolumn{2}{c}{$M=3$} && \multicolumn{2}{c}{$M=4$}&&\multicolumn{2}{c}{$M=5$}\\
\cline{2-3}\cline{5-6}\cline{8-9}
$k$ & $L^2$-Error & Ratio && $L^2$-Error & Ratio &&$L^2$-Error & Ratio\\
\hline
$1$ &$8.93e-4$ & --- &  & $8.46e-5$ & ---   &&$1.66e-5$ & --- \\
$2$ &$7.12e-5$ & $12.54$&& $6.33e-6$ &$13.36$ &&$1.24e-6$ & $13.39$\\
$3$ &$5.54e-6$ & $12.85$&& $4.71e-7$ &$13.44$ &&$9.19e-8$ & $13.49$\\
$4$ &$4.24e-7$ & $13.07$&& $3.50e-8$ &$13.46$ &&$6.83e-9$ & $13.46$\\
$5$ &$3.22e-8$ & $13.17$ && $2.60e-9$  &$13.46$ &&$5.09e-10$&$13.42$\\
\hline
&\multicolumn{8}{c}{$\nu=\gamma=-0.5$}\\
\cline{2-9}
&\multicolumn{2}{c}{$M=3$} && \multicolumn{2}{c}{$M=4$}&&\multicolumn{2}{c}{$M=5$}\\
\cline{2-3}\cline{5-6}\cline{8-9}
$k$ & $L^2$-Error & Ratio && $L^2$-Error & Ratio &&$L^2$-Error & Ratio\\
\hline
$1$ &$1.29e-3$ & --- &  & $1.24e-4$ & ---   &&$2.48e-5$ & --- \\
$2$ &$1.12e-4$ & $11.52$&& $1.04e-5$ &$11.92$ &&$2.12e-6$ & $11.70$\\
$3$ &$9.78e-6$ & $11.45$&& $9.12e-7$ &$11.40$ &&$1.85e-7$ & $11.46$\\
$4$ &$8.57e-7$ & $11.41$&& $8.02e-8$ &$11.37$ &&$1.63e-8$ & $11.35$\\
$5$ &$7.53e-8$ & $11.38$&& $7.07e-9$  &$11.34$ &&$1.44e-9$& $11.32$\\
\hline
\end{tabular}
\end{table}
\begin{table}[!ht]
\centering
\footnotesize
\caption{(Example \ref{ex1}.) Comparison 
of the maximum absolute error.}\label{tab:4}
\begin{tabular}{lllcccccc}
\hline
\multicolumn{3}{c}{Present method ($M=5$, $k=6$)}
&&\multicolumn{2}{c}{Method of \cite{Sonia1} (Radau II)}
&&\multicolumn{2}{c}{Method of \cite{Nemati}}\\
\cline{1-3}\cline{5-6}\cline{8-9}
$\nu=\gamma=0.5$ & $\nu=\gamma=0$ & $\nu=\gamma=0.5$ 
& &\multicolumn{2}{c}{$m=3$, $N=256$} 
&&\multicolumn{2}{c}{$n=192$}\\
\hline
$2.81e-10$ &$2.02e-10$ &$1.15e-10$ 
&  & \multicolumn{2}{c}{$5.13e-9$} & &\multicolumn{2}{c}{$5.16e-9$} \\
\hline
\end{tabular}
\end{table}
\begin{figure}[!ht]
\centering
\includegraphics[scale=0.7]{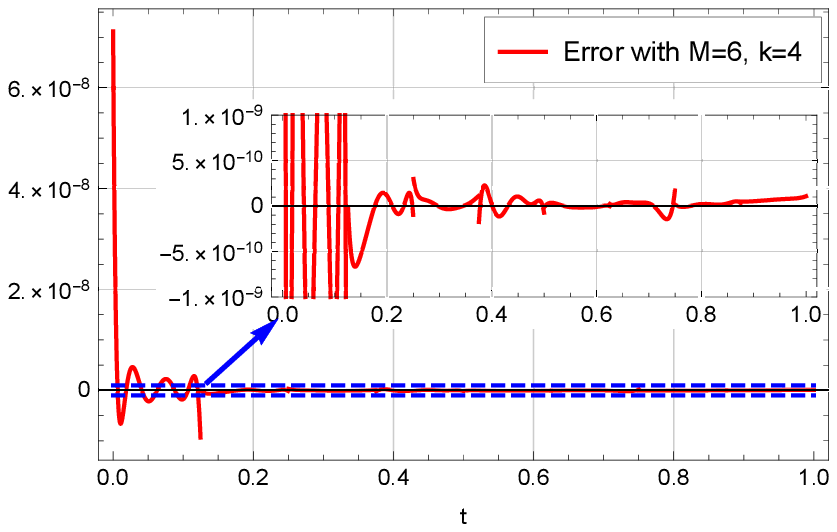}
\includegraphics[scale=0.7]{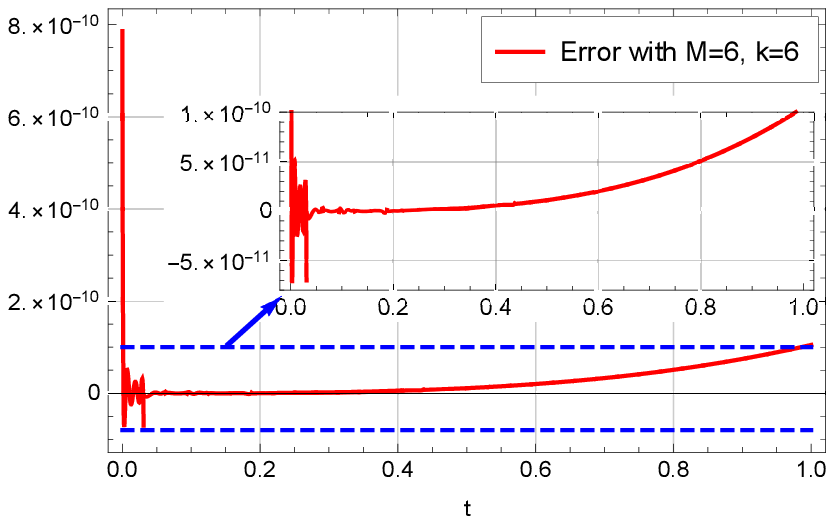}
\caption{(Example \ref{ex1}.) Plot of the error function 
with $\nu=\gamma=0.5$ and $M=6$, $k=4$ (left), and $M=6$, $k=6$ (right).}\label{fig:1}
\end{figure}
\end{example}


\begin{example}
\label{ex2} 
Consider the following third-kind VIE, which is used in the modelling 
of some heat conduction problems with mixed-type boundary 
conditions \cite{Sonia1,Nemati}:
\begin{equation*}
tu(t)=\frac{6}{7}t^3\sqrt{t}+\int_0^t \frac{1}{2}u(x)dx,
\quad t\in[0,1].
\end{equation*}
This equation has the exact solution $u(t)=t^{\frac{5}{2}}$ ($u\in H^2([0,1])$). 
The numerical results are given in Tables~\ref{tab:2}, \ref{tab:5} and Figure~\ref{fig:2}. 
The $L^2$-norms and the ratio of the error given in Table~\ref{tab:2} 
confirm the superiority of the second-kind Chebyshev wavelets compared to the 
Legendre wavelets and first-kind Chebyshev wavelets. The method 
converges slower in this example than in the previous one, which 
could be expected, due to the lower regularity of the solution.
A comparison between the maximum absolute error at collocation points 
of the present method and the methods given in \cite{Sonia1} and \cite{Nemati} 
is presented in Table~\ref{tab:5}. Moreover, the error functions in the case 
of the second-kind Chebyshev wavelets with $M=6$, $k=4$ and $M=6$, $k=6$, 
can be seen in Figure~\ref{fig:2}.  
\begin{table}[!ht]
\centering
\caption{(Example~\ref{ex2}.) Numerical results 
with different values of $M$ and $k$.}\label{tab:2}
\begin{tabular}{lllllllll}
\hline
&\multicolumn{8}{c}{$\nu=\gamma=0.5$}\\
\cline{2-9}
&\multicolumn{2}{c}{$M=3$} && \multicolumn{2}{c}{$M=4$}&&\multicolumn{2}{c}{$M=5$}\\
\cline{2-3}\cline{5-6}\cline{8-9}
$k$ & $L^2$-Error & Ratio && $L^2$-Error & Ratio &&$L^2$-Error & Ratio\\
\hline
$1$ &$1.06e-3$ & --- &  & $2.42e-4$ & ---   &&$7.83e-5$ & --- \\
$2$ &$1.38e-4$ & $7.68$&& $3.00e-5$ &$8.07$ &&$9.92e-6$ & $7.89$\\
$3$ &$1.74e-5$ & $7.93$&& $3.69e-6$ &$8.13$ &&$1.23e-6$ & $8.07$\\
$4$ &$2.15e-6$ & $8.09$&& $4.56e-7$ &$8.09$ &&$1.52e-7$ & $8.09$\\
$5$ &$2.64e-7$ & $8.14$ && $5.59e-8$ &$8.16$ &&$1.93e-8$ & $7.88$\\
\hline
&\multicolumn{8}{c}{$\nu=\gamma=0$}\\
\cline{2-9}
&\multicolumn{2}{c}{$M=3$} && \multicolumn{2}{c}{$M=4$}&&\multicolumn{2}{c}{$M=5$}\\
\cline{2-3}\cline{5-6}\cline{8-9}
$k$ & $L^2$-Error & Ratio && $L^2$-Error & Ratio &&$L^2$-Error & Ratio\\
\hline
$1$ &$1.07e-3$ & --- &  & $2.33e-4$ & ---   &&$7.33e-5$ & --- \\
$2$ &$1.38e-4$ & $7.75$&& $3.01e-5$ &$7.74$ &&$9.45e-6$ & $7.76$\\
$3$ &$1.79e-5$ & $7.71$&& $3.86e-6$ &$7.80$ &&$1.21e-6$ & $7.81$\\
$4$ &$2.29e-6$ & $7.82$&& $4.95e-7$ &$7.80$ &&$1.55e-7$ & $7.81$\\
$5$ &$2.93e-7$ & $7.82$ && $6.32e-8$  &$7.83$ &&$2.04e-8$&$7.60$\\
\hline
&\multicolumn{8}{c}{$\nu=\gamma=-0.5$}\\
\cline{2-9}
&\multicolumn{2}{c}{$M=3$} && \multicolumn{2}{c}{$M=4$}&&\multicolumn{2}{c}{$M=5$}\\
\cline{2-3}\cline{5-6}\cline{8-9}
$k$ & $L^2$-Error & Ratio && $L^2$-Error & Ratio &&$L^2$-Error & Ratio\\
\hline
$1$ &$1.28e-3$ & --- &   & $2.78e-4$ & ---   &&$8.82e-5$ & --- \\
$2$ &$1.84e-4$ & $6.96$&& $4.04e-5$ &$6.88$ &&$1.29e-5$ & $6.84$\\
$3$ &$2.71e-5$ & $6.79$&& $5.97e-6$ &$6.77$ &&$1.91e-6$ & $6.75$\\
$4$ &$4.02e-6$ & $6.74$&& $8.86e-7$ &$6.74$ &&$2.83e-7$ & $6.75$\\
$5$ &$5.97e-7$ & $6.73$&& $1.32e-7$ &$6.71$ &&$4.23e-8$& $6.69$\\
\hline
\end{tabular}
\end{table}
\begin{table}[!ht]
\centering
\footnotesize
\caption{(Example~\ref{ex2}.) Comparison of the maximum absolute error.}\label{tab:5}
\begin{tabular}{lllcccccc}
\hline
\multicolumn{3}{c}{Present method ($M=5$, $k=6$)}
&&\multicolumn{2}{c}{Method of \cite{Sonia1} (Chebyshev)}
&&\multicolumn{2}{c}{Method of \cite{Nemati}}\\
\cline{1-3}
$\nu=\gamma=0.5$ & $\nu=\gamma=0$ & $\nu=\gamma=0.5$ 
& &\multicolumn{2}{c}{$m=2$, $N=256$} &&\multicolumn{2}{c}{$n=192$}\\
\hline
$3.70e-8$ &$2.69e-8$ &$1.46e-8$ 
&  & \multicolumn{2}{c}{$1.46e-8$} 
& &\multicolumn{2}{c}{$3.46e-8$} \\
\hline
\end{tabular}
\end{table}
\begin{figure}[!ht]
\centering
\includegraphics[scale=0.7]{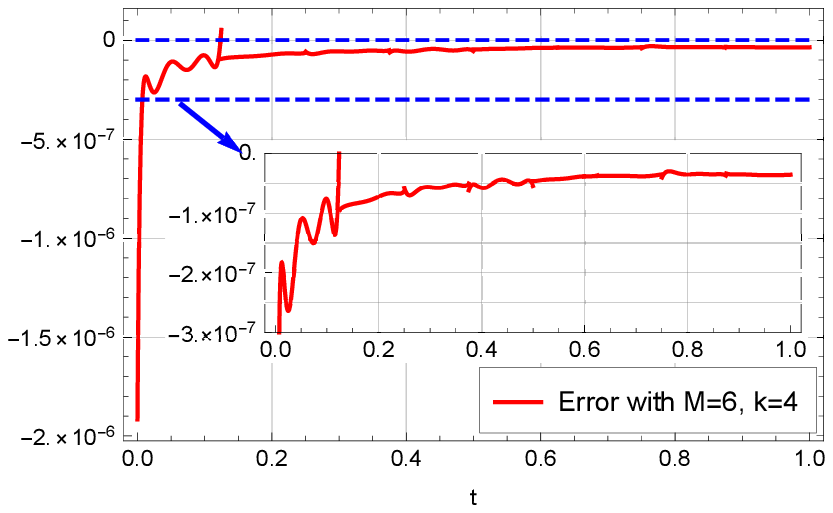}
\includegraphics[scale=0.7]{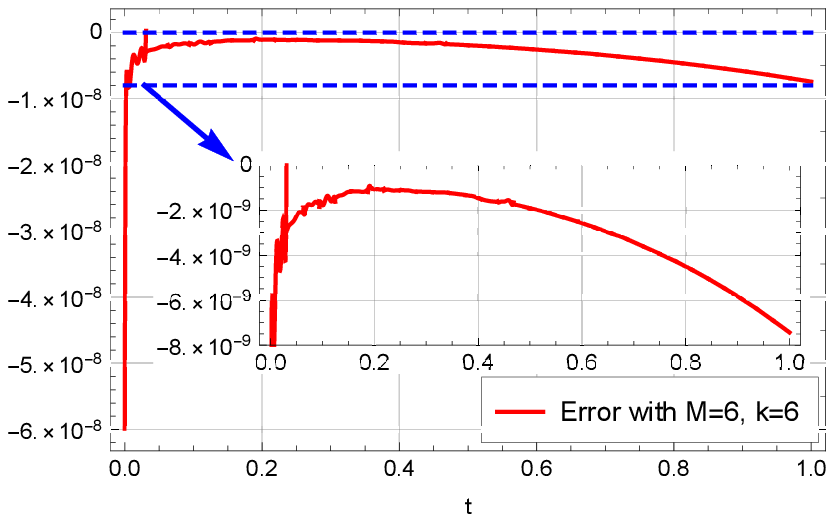}
\caption{(Example~\ref{ex2}) Plot of the error function with 
$\nu=\gamma=0.5$ and $M=6$, $k=4$ (left), 
and $M=6$, $k=6$ (right).}\label{fig:2}
\end{figure}
\end{example}


\begin{example}
\label{ex3} 
Consider the following VIE of the third kind:
\begin{equation*}
t^{3/2}u(t)=f(t)+\int_0^t \frac{\sqrt{2}}{2 \pi } x (t-x)^{-1/2 }u(x)dx,
\quad x\in[0,1],
\end{equation*}
where
\begin{equation*}
f(t)=t^{33/10} \left(1-\frac{\Gamma \left(\frac{19}{5}\right)}{\sqrt{2 \pi} 
\Gamma \left(\frac{43}{10}\right)}\right).
\end{equation*}
This equation has the exact solution $u(t)=t^{\frac{9}{5}}$ ($u\in H^1([0,1])$). 
The numerical results for this example are displayed in Table~\ref{tab:3} 
and Figure~\ref{fig:3}, which confirm the higher accuracy of the second-kind 
Chebyshev wavelets method compared with the Legendre wavelets and first-kind 
Chebyshev wavelets methods. Since the exact solution in this case is not 
so smooth as in the previous examples, the method converges slower.
\begin{table}[!ht]
\centering
\caption{(Example~\ref{ex3}.) Numerical results 
with different values of $M$ and $k$.}\label{tab:3}
\begin{tabular}{lllllllll}
\hline
&\multicolumn{8}{c}{$\nu=\gamma=0.5$}\\
\cline{2-9}
&\multicolumn{2}{c}{$M=3$} && \multicolumn{2}{c}{$M=4$}&&\multicolumn{2}{c}{$M=5$}\\
\cline{2-3}\cline{5-6}\cline{8-9}
$k$ & $L^2$-Error & Ratio && $L^2$-Error & Ratio &&$L^2$-Error & Ratio\\
\hline
$1$ &$4.18e-4$ & --- &  & $1.35e-4$ & ---   &&$5.48e-5$ & --- \\
$2$ &$7.88e-5$ & $5.30$&& $2.46e-5$ &$5.49$ &&$9.80e-6$ & $5.59$\\
$3$ &$1.39e-5$ & $5.67$&& $4.27e-6$ &$5.76$ &&$1.70e-6$ & $5.76$\\
$4$ &$2.40e-6$ & $5.79$&& $7.36e-7$ &$5.80$ &&$2.92e-7$ & $5.82$\\
$5$ &$4.12e-7$ & $5.83$&& $1.26e-7$ &$5.84$ &&$4.99e-8$ & $5.85$\\
\hline
&\multicolumn{8}{c}{$\nu=\gamma=0$}\\
\cline{2-9}
&\multicolumn{2}{c}{$M=3$} && \multicolumn{2}{c}{$M=4$}&&\multicolumn{2}{c}{$M=5$}\\
\cline{2-3}\cline{5-6}\cline{8-9}
$k$ & $L^2$-Error & Ratio && $L^2$-Error & Ratio &&$L^2$-Error & Ratio\\
\hline
$1$ &$6.13e-4$ & --- &  & $2.06e-4$ & ---   &&$8.73e-5$ & --- \\
$2$ &$1.25e-4$ & $4.90$&& $4.18e-5$ &$4.93$ &&$1.77e-5$ & $4.93$\\
$3$ &$2.53e-5$ & $4.94$&& $8.50e-6$ &$4.92$ &&$3.60e-6$ & $4.92$\\
$4$ &$5.14e-6$ & $4.92$&& $1.73e-6$ &$4.91$ &&$7.31e-7$ & $4.92$\\
$5$ &$1.04e-6$ & $4.94$&& $3.51e-7$ &$4.93$ &&$1.48e-7$ & $4.94$\\
\hline
&\multicolumn{8}{c}{$\nu=\gamma=-0.5$}\\
\cline{2-9}
&\multicolumn{2}{c}{$M=3$} && \multicolumn{2}{c}{$M=4$}&&\multicolumn{2}{c}{$M=5$}\\
\cline{2-3}\cline{5-6}\cline{8-9}
$k$ & $L^2$-Error & Ratio && $L^2$-Error & Ratio &&$L^2$-Error & Ratio\\
\hline
$1$ &$9.70e-4$ & --- &  & $3.40e-4$ & ---   &&$1.50e-4$ & --- \\
$2$ &$2.27e-4$ & $4.27$&& $8.04e-5$ &$4.23$ &&$3.57e-5$ & $4.20$\\
$3$ &$5.43e-5$ & $4.18$&& $1.93e-5$ &$4.17$ &&$8.59e-6$ & $4.16$\\
$4$ &$1.31e-5$ & $4.15$&& $4.65e-6$ &$4.15$ &&$2.07e-6$ & $4.15$\\
$5$ &$3.15e-6$ & $4.16$&& $1.12e-6$ &$4.15$ &&$4.99e-7$ & $4.15$\\
\hline
\end{tabular}
\end{table}
\begin{figure}[!ht]
\centering
\includegraphics[scale=0.7]{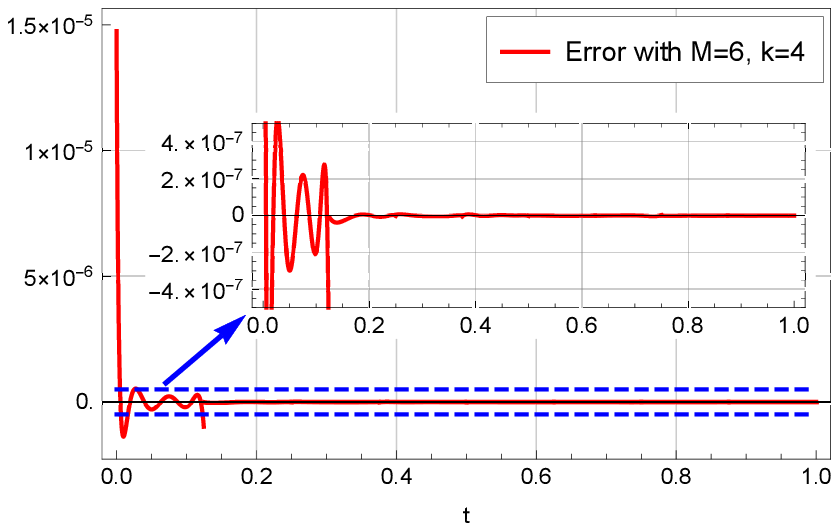}
\includegraphics[scale=0.7]{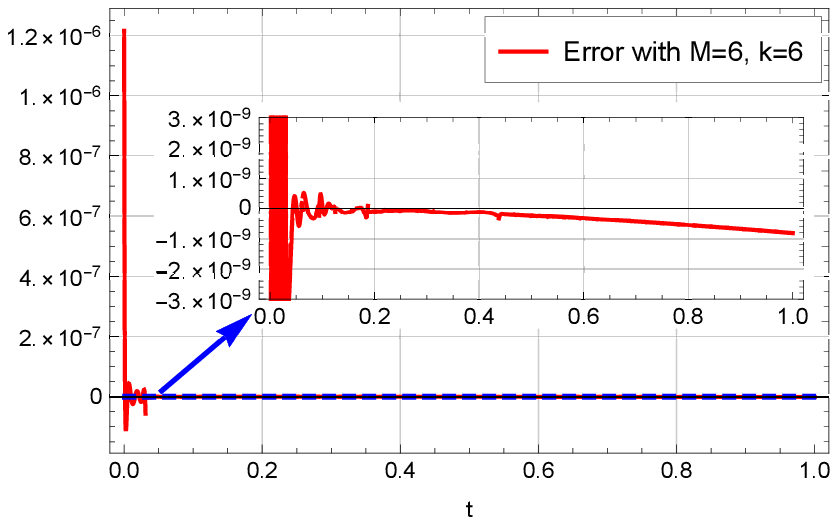}
\caption{(Example~\ref{ex3}.) Plot of the error function 
with $\nu=\gamma=0.5$ and $M=6$, $k=4$ (left), 
and $M=6$, $k=6$ (right).}\label{fig:3}
\end{figure}
\end{example}


\section{Concluding remarks}
\label{sec:7}

In this work, a numerical method based on Jacobi wavelets 
has been introduced for solving a class of Volterra integral 
equations of the third-kind. First, the Jacobi wavelet functions 
have been introduced. Some error bounds are presented for the 
best approximation of a given function using the Jacobi wavelets. 
A numerical method based on the Jacobi wavelets, together with the 
use of the Gauss--Jacobi quadrature formula, has been proposed 
in order to solve Volterra integral equations of the third-kind. 
A criterion has been introduced for choosing the number of basis 
functions necessary to reach a specified accuracy. Numerical results 
have been included to show the applicability and high accuracy 
of this new technique. Our results confirm that the new method 
has higher accuracy than the other existing methods 
to solve the considered class of equations.  


\section*{Acknowledgment}

Pedro M. Lima acknowledges support from Funda\c c\~ao para 
a Ci\^encia e a Tecnologia (FCT, the Portuguese Foundation 
for Science and Technology) through the grant 
UID/MAT/04621/2019 (CEMAT); Delfim F. M. Torres was supported 
by FCT within project UIDB/04106/2020 (CIDMA).



\end{document}